\documentclass[11pt]{amsart}
\usepackage[T1]{fontenc}
\usepackage{lmodern}
\usepackage[utf8]{inputenc}
\usepackage{amsmath,amssymb,amsfonts,amsthm}
\usepackage{cite}
\usepackage{tikz}
\usepackage{geometry}
\usepackage[hidelinks]{hyperref}
\input{glyphtounicode}
\hypersetup{
  pdftitle={Cesàro Means Along Polynomial Subsequences of Fourier Partial Sums at Lebesgue Points},
  pdfauthor={Ushangi Goginava},
  pdfsubject={Fourier series and Cesàro summability},
  pdfkeywords={Fourier series, Cesàro means, polynomial subsequences,
    Lebesgue points, Weyl sums, Zalcwasser problem}
}

\numberwithin{equation}{section}
\theoremstyle{plain}
\newtheorem{theorem}{Theorem}
\newtheorem{lemma}[theorem]{Lemma}

\theoremstyle{definition}

\theoremstyle{remark}

\newcommand{\T}{\mathbb{T}}

\begin{document}

\title[Ces\`aro means along polynomial subsequences]{Ces\`aro Means along
Polynomial Subsequences of Fourier Partial Sums at Lebesgue Points}
\author{Ushangi Goginava}
\address{U. Goginava, Department of Mathematical Sciences\\
United Arab Emirates University, P.O. Box No. 15551\\
Al Ain, Abu Dhabi, UAE}
\email{zazagoginava@gmail.com; ugoginava@uaeu.ac.ae}
\date{}
\subjclass[2020]{42A24, 42A20, 11L07, 40G05}
\keywords{Fourier series, subsequential partial sums, Ces\`aro means,
Lebesgue points, Zalcwasser problem, Weyl sums}

\begin{abstract}
In 1936, Zalcwasser proved the almost everywhere convergence of the
arithmetic means of the square subsequence of trigonometric Fourier partial
sums and asked whether this result extends to higher powers and to Ces\`aro
means of fractional order. We give affirmative answers to both questions in
a stronger pointwise form. Let $0<\alpha\leq 1$, and let $P$ be an
integer-valued polynomial of degree with positive leading
coefficient. We prove that the $(C,\alpha)$ means of the Fourier partial sums
along any sequence whose $k$-th term equals $P(k)$ for all sufficiently large
$k$ converges to $f(x)$ at every Lebesgue point $x$ of every
$f\in L^{1}(\T)$.
\end{abstract}

\maketitle

\section{Introduction}

Let $\T=\mathbb{R}/(2\pi\mathbb{Z})$, and let $f\in L^{1}(\T)$. The Fourier
coefficients of $f$ are defined by
\begin{equation*}
\widehat{f}(m):=\frac{1}{2\pi}\int_{-\pi}^{\pi}f(x)e^{-imx}\,dx,
\qquad m\in\mathbb{Z}.
\end{equation*}

The $n$-th partial sum of the Fourier series of $f$ is defined by
\begin{equation*}
S_nf(x):=\sum_{|m|\leq n}\widehat{f}(m)e^{imx},
\qquad n=0,1,2,\ldots.
\end{equation*}

For $\beta>-1$, put
\begin{equation*}
A_n^\beta:=\binom{n+\beta}{n},
\qquad n=0,1,2,\ldots.
\end{equation*}
For $0<\alpha\leq 1$ and a strictly increasing sequence
$a=(a_k)_{k\geq 1}$ of positive integers, define the $(C,\alpha)$ means by
\begin{equation*}
\sigma_{N,\alpha}^{a}f(x)
:=\frac{1}{A_{N-1}^{\alpha}}
\sum_{k=1}^{N}A_{N-k}^{\alpha-1}S_{a_k}f(x),
\qquad N\geq 1.
\end{equation*}
For $\alpha=1$, these are the arithmetic means.

We say that $x\in\T$ is a Lebesgue point of $f\in L^{1}(\T)$ if
\begin{equation*}
\int_{-h}^{h}|f(x+t)-f(x)|\,dt=o(h)
\qquad (h\downarrow 0).
\end{equation*}

It is a classical theorem of Lebesgue that the $(C,\alpha)$ means, with
$\alpha>0$, of the partial sums of the Fourier series of an integrable
function converge to the function at every Lebesgue point; see
Lebesgue~\cite{Lebesgue1905} or Zygmund~\cite[Ch.~III]{Zygmund}.

In 1936, Zalcwasser~\cite{Zalcwasser1936} investigated the arithmetic
means of partial sums of Fourier series of functions $f\in L^{1}(\T)$ along
the subsequence $n_k=k^2$. He proved that
\begin{equation}
\frac{1}{m+1}\sum_{k=0}^{m}S_{k^2}f(x)\longrightarrow f(x)
\label{k2}
\end{equation}
almost everywhere for every $f\in L^{1}(\T)$, and posed two natural
problems:

\begin{enumerate}
\item Can the subsequence in \eqref{k2} be replaced by the more general
subsequence $n_k=k^r$, where $r\in\mathbb{N}$ and $r>2$?

\item Can the arithmetic means be replaced by the more general $(C,\alpha)$
means with $0<\alpha<1$?
\end{enumerate}

Convergence at Lebesgue points for arithmetic means was studied by
Belinsky and by Izumi and Kawata
\cite{Bel1,Bel2,IzumiKawata1940}. We also mention the results of G\'at
\cite{gat1,gat2} concerning almost-everywhere convergence and divergence of
Ces\`aro means of subsequences of Fourier partial sums. The above-mentioned
results do not cover the two questions of Zalcwasser. In this paper, we give
positive answers to both questions. More precisely, we prove the following
theorem.

\begin{theorem}
\label{main}
Let $0<\alpha\leq 1$. Let $P\in\mathbb{Q}[x]$ be an integer-valued
polynomial of degree $r\geq 2$ with positive leading coefficient. Let
$a=(a_k)$ be a strictly increasing sequence of positive integers such that
\begin{equation*}
a_k=P(k)
\end{equation*}
for all sufficiently large $k$. Then, for every $f\in L^{1}(\T)$ and every
Lebesgue point $x$ of $f$,
\begin{equation*}
\sigma_{N,\alpha}^{a}f(x)\longrightarrow f(x).
\end{equation*}
\end{theorem}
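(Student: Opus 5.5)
We work with the kernel of $\sigma_{N,\alpha}^{a}$. Since $S_nf(x)=\frac{1}{\pi}\int_{-\pi}^{\pi}f(x+t)D_n(t)\,dt$ with $D_n(t)=\sin((n+\frac12)t)/(2\sin\frac t2)$, we have
\begin{equation*}
\sigma_{N,\alpha}^{a}f(x)-f(x)=\frac{1}{\pi}\int_{-\pi}^{\pi}\bigl(f(x+t)-f(x)\bigr)K_N(t)\,dt,
\qquad
K_N(t)=\frac{1}{A_{N-1}^{\alpha}}\sum_{k=1}^{N}A_{N-k}^{\alpha-1}D_{a_k}(t).
\end{equation*}
The finitely many initial terms with $a_k\neq P(k)$ contribute $O(N^{-\alpha})\sum_{k\le k_0}|S_{a_k}f(x)|\to0$, so we may assume $a_k=P(k)$ for all $k$. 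Write $\varphi(t)=|f(x+t)-f(x)|+|f(x-t)-f(x)|$ and $\Phi(h)=\int_0^h\varphi=o(h)$.

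The proof reduces to the standard Lebesgue-point criterion: it suffices to show
\begin{equation*}
|K_N(t)|\lesssim M_N \ (0<t\le\delta_N), \qquad |K_N(t)|\lesssim \frac{\omega_N(t)}{t} \ (\delta_N<t\le\pi),
\end{equation*}
where $M_N\delta_N=O(1)$ and $\omega_N(t)/t$ is (up to constants) decreasing with $\int_{\delta_N}^{\pi}\Phi(t)\,|d(\omega_N(t)/t)|$ and the boundary terms tending to $0$. Then integration by parts against $\Phi$, using $\Phi(t)=o(t)$, finishes as in Lebesgue's classical proof.

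The key estimate concerns the numerator
\begin{equation*}
\sum_{k}A_{N-k}^{\alpha-1}e^{i(P(k)+\frac12)t}.
\end{equation*}
The trivial bound is $A_{N-1}^\alpha$. Since the weights $A_{N-k}^{\alpha-1}$ are monotone in $k$ and of size $(N-k+1)^{\alpha-1}$, Abel summation reduces everything to bounding the partial Weyl sums
\begin{equation*}
W_{M,L}(t)=\sum_{M<k\le L}e^{iP(k)t}.
\end{equation*}
Using the crude bound on the last $N\eta$ terms, where the weights are largest, and summation by parts on the rest gives
\begin{equation*}
\Bigl|\sum_k A_{N-k}^{\alpha-1}e^{iP(k)t}\Bigr|\lesssim (N\eta)^{\alpha}+(N\eta)^{\alpha-1}\sup_{M<L\le N}|W_{M,L}(t)|.
\end{equation*}
We then optimize in $\eta$. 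Dividing by $N^\alpha\sin(t/2)$, we need $\sup|W|=o(N)$ in an averaged sense.

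The main obstacle is that $t$ near rationals with small denominators (major arcs) is where Weyl sums are large, for instance $t=2\pi a/q$, where $|W|\approx N|S(q,a)|/q$. Only Lebesgue-point regularity is available, so a blunt exceptional-set argument fails. I would split $(0,\pi]$ into:
\begin{enumerate}
\item the region $t\le 1/N^{r}$, where $|K_N|\lesssim N^{r}$, handled trivially with $\delta_N=N^{-r}$;
\item the region $N^{-r}<t\le N^{-r+1}$ (and more generally near $0$), handled by van der Corput or the first-derivative test on $kt\mapsto P(k)t$, giving $|W|\lesssim 1/(N^{r-1}t)$ plus lower-order terms;
\item the remaining region, where Weyl's inequality gives $|W|\lesssim N^{1-\epsilon}$ off major arcs $|t-2\pi a/q|\le N^{-r+\theta}$, $q\le N^{\theta}$.
\end{enumerate}
On a major arc around $2\pi a/q$ with $q\ge2$, the point $t$ stays at distance at least $\gtrsim 1/q$ from $0$. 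There I would use the complete-sum approximation
\begin{equation*}
W\approx q^{-1}S(q,a)\int e^{iP(u)\beta}\,du,
\end{equation*}
together with $|S(q,a)|\lesssim q^{1-1/r}$. This gives a contribution of size $\int \Phi$-weighted $q^{-1/r}\cdot\min(N,|\beta|^{-1/r})$. Summed over the arcs, its total measure-weighted mass is bounded by $\sum_q q\cdot q^{-1/r}N^{-r+\theta}\cdot N/N=o(1)$ times $\sup|f(x+t)-f(x)|$-type averages. Controlling this via $\Phi$ alone, rather than a sup, is the delicate point. I would bound the integral over each arc by local averages of $\varphi$ near $2\pi a/q$, sum over $a$, and reorganize into $\int_0^\pi\varphi\le\|f\|_1+2\pi|f(x)|$ multiplied by $\sup_t N^{-\alpha}|{\rm numerator}|$ off the origin, which tends to $0$. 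In other words, away from $t=0$ the kernel tends to $0$ uniformly on $[\delta,\pi]$ for every fixed $\delta$, since $\sup_{t\in[\delta,\pi]}|W|=o(N)$ by Weyl equidistribution, including at rationals because $P(k)\bmod q$ is not constant. This uniform $o(N)$ away from zero, combined with the near-zero analysis in the first two regions, is the heart of the argument.
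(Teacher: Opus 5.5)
There is a genuine gap, and it sits exactly where you place the heart of the argument. The claim $\sup_{t\in[\delta,\pi]}|W(t)|=o(N)$ is false. At $t=2\pi a/q$ one has $W\approx (N/q)S(q,a)$, and the fact that $P \bmod q$ is not constant only gives $|S(q,a)|<q$, which is a constant-factor saving. You wrote $|W|\approx N|S(q,a)|/q$ yourself a few lines earlier.

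Concretely, take $P(k)=k^2$, $\alpha=1$ and a prime $q\equiv 3\pmod 4$. The Gauss sum is $i\sqrt q$, and $K_N(2\pi/q)\to\cos(\pi/q)/\bigl(2\sqrt q\,\sin(\pi/q)\bigr)$. This equals $1/6$ for $q=3$ and is $\asymp\sqrt q$ for large $q$, and it persists on a neighbourhood of width $\asymp N^{-2}$. This breaks your argument in three places:
\begin{itemize}
\item Any decreasing majorant $\omega_N(t)/t$ must be $\ge 1/12$ on all of $[\delta,2\pi/3]$ for large $N$. The boundary and Stieltjes terms in your criterion add up to $\int_{\delta_N}^{\pi}\varphi(t)\,\omega_N(t)t^{-1}\,dt\ge\frac1{12}\int_\delta^{2\pi/3}\varphi$, which does not tend to $0$.
\item For the same reason, the bound ``$\int_0^\pi\varphi$ times $\sup_t N^{-\alpha}|\mathrm{numerator}|$ off the origin'' does not tend to $0$.
\item Near the origin it is worse. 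The arcs around $2\pi/q$ with $q$ up to $N^{\theta}$ lie inside $(0,\delta)$ and carry kernel values that grow like a power of $q$. They cannot be absorbed into $\|\varphi\|_{1}$ times anything small.
\end{itemize}
So the major-arc step, which you yourself flag as delicate, is not actually carried out.

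What is missing is a different division of labour. On $(0,\delta]$ the kernel does not need to be small. It suffices to have nonincreasing majorants $M_N$ with $\sup_N\bigl\{a_N^{-1}M_N(a_N^{-1})+\int_{a_N^{-1}}^{\delta_0}M_N\bigr\}<\infty$, because after integrating by parts the smallness comes from $\Phi(t)\le\varepsilon t$.

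The paper obtains such a majorant, $M_N(t)=C\bigl(t^{\theta-1}+N^{-\theta}t^{-1}+N^{-r\theta}t^{-\theta-1}\bigr)$, in two steps. First, a Weyl bound for polynomials with small leading coefficient $ct$: $\bigl|\sum_{j<L}e^{iQ(j)}\bigr|\le C\bigl(Lt^{\eta}+L^{1-\eta}+t^{-\eta}L^{1-r\eta}\bigr)$, uniformly in the lower-order coefficients. Second, Abel summation against the weights $A_{N-k}^{\alpha-1}$, which is where $\theta<\alpha/r$ is needed. The $N$-independent term $t^{\theta-1}$ dominates all the major-arc spikes at once, since there $t\gtrsim 1/q$ and $|S(q,a)|/q$ decays in $q$. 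No arc-by-arc analysis is needed.

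On $[\delta,\pi]$ the paper does not bound $|K_N|$ at all. For each $k$, $\int_\delta^\pi\varphi_x(t)\sin((a_k+\tfrac12)t)/(2\sin(t/2))\,dt\to0$ by Riemann--Lebesgue, and the normalized Ces\`aro weights form a regular Toeplitz matrix. Within your own framework you could instead use $|K_N|\le 1/(2\sin(\delta/2))$ on $[\delta,\pi]$ together with $K_N(t)\to0$ for every $t$ with $t/2\pi$ irrational, and then apply dominated convergence. That is pointwise almost-everywhere convergence, not the uniform statement you claim.
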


Taking $P(k)=k^r$,  gives affirmative
answers to both questions of Zalcwasser.

\section{Proof of the Main Result}

Before proving the main theorem, we give some preliminary results. The
Ces\`aro numbers satisfy
\begin{equation}
\sum_{j=0}^{N}A_j^{\alpha-1}=A_N^\alpha,
\qquad N\geq 0,
\label{cesaro-identity}
\end{equation}
and, for every $\beta>-1$,
\begin{equation}
A_n^\beta\asymp_\beta (n+1)^\beta,
\qquad n\geq 0.
\label{cesaro-asymptotic}
\end{equation}

Set
\begin{equation*}
\varphi_x(t):=f(x+t)+f(x-t)-2f(x),
\qquad 0<t\leq\pi,
\end{equation*}
and
\begin{equation*}
\Phi_x(h):=\int_0^h|\varphi_x(t)|\,dt.
\end{equation*}
At every Lebesgue point $x$ of $f$, we have
$\Phi_x(h)=o(h)$ as $h\downarrow 0$. The usual Dirichlet-kernel
representation gives
\begin{equation}
\sigma_{N,\alpha}^{a}f(x)-f(x)
=\frac{1}{\pi}\int_0^\pi\varphi_x(t)K_{N,\alpha}(t)\,dt,
\label{kernel-representation}
\end{equation}
where
\begin{equation*}
K_{N,\alpha}(t)
:=\frac{1}{A_{N-1}^{\alpha}}
\frac{H_{N,\alpha}(t)}{2\sin(t/2)},
\end{equation*}
and
\begin{equation*}
H_{N,\alpha}(t)
:=\sum_{k=1}^{N}A_{N-k}^{\alpha-1}
\sin\bigl((a_k+1/2)t\bigr).
\end{equation*}

For the proof of the main theorem, we need two auxiliary lemmas.

\begin{lemma}[Polynomial Weyl estimate]
\label{Weyl}
Let $r\geq 2$ and $c\in\mathbb{R}\setminus\{0\}$. There exists
$\eta_0=\eta_0(r)>0$ such that, for every $0<\eta<\eta_0$, there is a
constant $C=C(c,r,\eta)$ with the following property. If $Q$ is a real
polynomial of degree $r$ whose leading coefficient is $ct$, then, for every
integer $L\geq 1$ and every $0<t\leq 1$,
\begin{equation}
\left|\sum_{j=0}^{L-1}e^{iQ(j)}\right|
\leq C\left(Lt^\eta+L^{1-\eta}
+t^{-\eta}L^{1-r\eta}\right).
\label{weyl-estimate}
\end{equation}
The constant is independent of the lower-order coefficients of $Q$.
\end{lemma}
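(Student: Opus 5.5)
The plan is to prove \eqref{weyl-estimate} by classical Weyl differencing applied $r-1$ times. This reduces the problem to linear exponential sums, whose frequency is essentially $\theta:=r!\,ct/(2\pi)$ times a product of the shifts. The resulting sums of $\min(L,\|\cdot\|^{-1})$ will be counted using only the size of $\theta$, with no rational approximation. The last step converts the result into the three-term shape with a small exponent $\eta$. The only input about $Q$ is its leading coefficient $ct$, since differencing $r-1$ times removes every lower-order coefficient. This is why $C$ will not depend on them.

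\emph{Step 1 (trivial range).} Put $t_0:=\min\{1,\pi/(r!\,|c|)\}$. If $t_0<t\le 1$, then $Lt^\eta\ge t_0^{\eta}L$, so the trivial bound $|S|\le L$ already gives \eqref{weyl-estimate} with $C=t_0^{-\eta}$. From now on assume $t\le t_0$, so that $|\theta|\le 1/2$; this is what makes the counting in Step 3 elementary.

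\emph{Step 2 (differencing).} Let $S:=\sum_{j<L}e^{iQ(j)}$. The Weyl--van der Corput inequality, applied $r-1$ times, gives
\begin{equation*}
|S|^{2^{r-1}}\le C_r\,L^{2^{r-1}-r}\sum_{|h_1|,\dots,|h_{r-1}|<L}\min\bigl(L,\|\theta h_1\cdots h_{r-1}\|^{-1}\bigr).
\end{equation*}
Here $\|\cdot\|$ is the distance to the nearest integer. The phase after differencing is linear in $j$ with slope $2\pi\theta h_1\cdots h_{r-1}$ plus a constant, and the geometric sum is bounded by the $\min$. Tuples with some $h_i=0$ number $O(L^{r-2})$, each contributes at most $L$, and together they give $O(L^{r-1})$. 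For the remaining tuples, group them by $m=|h_1\cdots h_{r-1}|\in[1,M)$ with $M=L^{r-1}$; each $m$ occurs at most $2^{r-1}d_{r-1}(m)\ll_\varepsilon L^{\varepsilon}$ times.

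\emph{Step 3 (counting).} Since $|\theta|\le 1/2$, split $[1,M)$ into $O(M|\theta|+1)$ blocks of length about $1/(2|\theta|)$. On each block the values $\theta m$ are spaced by $|\theta|$ modulo $1$, so the standard bound gives
\begin{equation*}
\sum_{1\le m<M}\min\bigl(L,\|\theta m\|^{-1}\bigr)\ll (M|\theta|+1)\bigl(L+|\theta|^{-1}\log(2L)\bigr).
\end{equation*}
Collecting terms,
\begin{equation*}
|S|^{2^{r-1}}\ll_\varepsilon L^{\varepsilon}\Bigl(L^{2^{r-1}}t+L^{2^{r-1}-1}+t^{-1}L^{2^{r-1}-r}\Bigr).
\end{equation*}
Taking $2^{r-1}$-th roots with $\sigma=2^{1-r}$ yields
\begin{equation*}
|S|\ll L^{\varepsilon}\bigl(Lt^{\sigma}+L^{1-\sigma}+t^{-\sigma}L^{1-r\sigma}\bigr).
\end{equation*}
This is \eqref{weyl-estimate} with $\eta=\sigma$, except for the factor $L^{\varepsilon}$.

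\emph{Step 4 (removing $L^\varepsilon$) is the main obstacle.} The natural choice is $\eta_0:=2^{1-r}$. For the second and third terms, any $\eta<\sigma$ absorbs the loss: $L^{\varepsilon}L^{1-\sigma}\le L^{1-\eta}$ once $\varepsilon\le\sigma-\eta$. For the third term, if $t^{-\sigma}L^{1-r\sigma+\varepsilon}$ exceeds the trivial bound $L$, then $t^{-\eta}L^{1-r\eta}\ge L$ as well, after adjusting $\eta$ and $\varepsilon$. The first term $L^{1+\varepsilon}t^{\sigma}$ is harder, because it beats $Lt^\eta$ only when $t\le L^{-\varepsilon/(\sigma-\eta)}$.

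To handle this I would first try to avoid the divisor bound altogether. The idea is to sum over $h_{r-1}$ first, for fixed $h_1,\dots,h_{r-2}$, with $\beta=\theta h_1\cdots h_{r-2}$:
\begin{itemize}
\item when $|\beta|\le 1/2$, use the block bound $(L|\beta|+1)(L+|\beta|^{-1}\log L)$;
\item summing $|\beta|^{-1}$ over $h_1,\dots,h_{r-2}$ costs only $(\log L)^{r-2}|\theta|^{-1}$;
\item when $|\beta|>1/2$, use the trivial bound $L^2$, but only on the few tuples with $|h_1\cdots h_{r-2}|\gtrsim|\theta|^{-1}$; their number is $\ll L^{r-2}\cdot L|\theta|(\log L)^{r-3}$, up to harmless factors.
\end{itemize}
This replaces $L^{\varepsilon}$ by powers of $\log L$. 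A factor $(\log L)^{A}$ is absorbed into $L^{\sigma-\eta}$ in the second term, and into $L^{\sigma-\eta}$ together with $t^{\sigma-\eta}$ in the first and third terms, by splitting according to whether $t\le L^{-1}$ or not. If this bookkeeping fails in some range, the fallback is to accept an explicit smaller $\eta_0$ (say $\eta_0=2^{-r}$) and use the trivial bound $|S|\le L$ in the intermediate range where $t$ is comparable to a small power of $L^{-1}$.
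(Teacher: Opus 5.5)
Your Steps 1--3 correctly derive Weyl's inequality with an $L^{\varepsilon}$ loss, but the gap is exactly Step 4, and none of your remedies closes it.

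The lemma tolerates no loss that grows with $L$. Letting $L\to\infty$ with $t$ fixed, it requires $\limsup|S|/L\le Ct^{\eta}$. Letting $L\to\infty$ with $K:=tL^{r}$ fixed, it requires $\limsup|S|/L\le CK^{-\eta}$. In both cases $C$ is independent of $t$, resp.\ $K$. A bound carrying $L^{\varepsilon}$ or $(\log L)^{A}$ gives nothing in these limits, and the trivial bound gives only $1$. So the fallback (trivial bound in an intermediate range, smaller $\eta_0$) fails. Absorbing $(\log L)^{A}$ into $t^{\sigma-\eta}$ in the first term also fails. Your third-term claim is false as well: $t^{-\sigma}L^{1-r\sigma+\varepsilon}\ge L$ means $tL^{r}\le L^{\varepsilon/\sigma}$, whereas $t^{-\eta}L^{1-r\eta}\ge L$ means $tL^{r}\le 1$.

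Summing over $h_{r-1}$ first does not help either. For fixed $t$, all but $O_t(1)$ of the $\asymp L^{r-2}$ tuples with all $h_i\neq0$ have $|\beta|>1/2$, so these tuples are not ``few''. For them, $\sum_{|h_{r-1}|<L}\min(L,\|\beta h_{r-1}\|^{-1})$ can genuinely be $\asymp L^{2}$: if $\theta=a/q$, then $\beta\in\mathbb{Z}$ whenever $q\mid h_1$. Bounding these trivially gives back $|S|^{2^{r-1}}\ll L^{2^{r-1}}$. Counting them properly means knowing how often $\theta h_1\cdots h_{r-2}$ is near an integer, which is the same divisor-type problem that produced the $L^{\varepsilon}$. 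The root cause is that shifts up to $L$ let $\theta h_1\cdots h_{r-1}$ wrap around modulo $1$, so the arithmetic of $\theta$ enters.

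The missing ingredient is a loss-free estimate, which is what the paper draws from van der Corput's $r$-th derivative test. Concretely, use short shifts in the Weyl--van der Corput inequality
\[
|S|^{2^{k}}\ll L^{2^{k}}H^{-1}+L^{2^{k}-1}H^{-k}\sum_{1\le|h_i|<H}|S_{\mathbf{h}}|,\qquad 1\le H\le L,
\]
where $S_{\mathbf{h}}$ is a sum of $e^{i\Delta_{h_1}\cdots\Delta_{h_k}Q(j)}$ over a subinterval of $[0,L)$. Take $k=r-2$, so the phases are quadratic with second derivative $\lambda_{\mathbf{h}}\asymp t|h_1\cdots h_{r-2}|$. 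Then apply the second-derivative test $|S_{\mathbf{h}}|\ll L\lambda_{\mathbf{h}}^{1/2}+\lambda_{\mathbf{h}}^{-1/2}$. It is uniform in the linear coefficient and uses no logarithms or Diophantine information. The choice $H\asymp t^{-1/r}$ is admissible exactly when $tL^{r}\ge1$, and it gives
\[
|S|\ll L\bigl(t^{1/r}+(tL^{r})^{-1/r}\bigr)^{2^{2-r}}.
\]
If $tL^{r}<1$, use $|S|\le L\le t^{-\eta}L^{1-r\eta}$. This proves the lemma with $\eta_0=2^{2-r}/r$; for $r=2$ no differencing is needed.
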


\begin{proof}
By the van der Corput $r$-th derivative estimate for exponential sums
(see, for example, \cite{GrKol}), there exist constants $\rho_r,\tau_r>0$,
depending only on $r$, such that, 
\begin{equation*}
\frac{1}{L}\left|\sum_{j=0}^{L-1}e^{iQ(j)}\right|
\leq C\left(t^{\rho_r}+L^{-\tau_r}
+(tL^r)^{-\tau_r}\right)
\end{equation*}
whenever $t$ is sufficiently small. Values of $t$ bounded away from zero
are covered by the trivial estimate. Take
$\eta_0:=\min\{\rho_r,\tau_r\}$. If $tL^r\geq 1$, then, for
$0<\eta<\eta_0$, each term on the right is bounded by the corresponding
term with exponent $\eta$. If $tL^r<1$, the trivial estimate is bounded by
$t^{-\eta}L^{1-r\eta}$. This proves \eqref{weyl-estimate}. Since the
$r$-th derivative of $Q$ depends only on its leading coefficient, the
constant is uniform in all lower-order coefficients.
\end{proof}

\begin{lemma}
\label{mainlemma}
Let $0<\alpha\leq 1$, and let $a=(a_k)$ be a strictly increasing sequence
of positive integers. Suppose that there are $\delta_0\in(0,\pi]$ and
nonnegative, nonincreasing, locally absolutely continuous functions $M_N$
on $(0,\delta_0]$ such that
\begin{equation}
|K_{N,\alpha}(t)|\leq M_N(t),
\qquad a_N^{-1}\leq t\leq\delta_0,
\label{l1}
\end{equation}
and
\begin{equation}
\sup_N\left\{a_N^{-1}M_N(a_N^{-1})
+\int_{a_N^{-1}}^{\delta_0}M_N(t)\,dt\right\}<\infty.
\label{l2}
\end{equation}
Then, for every $f\in L^{1}(\T)$ and every Lebesgue point $x$ of $f$,
\begin{equation*}
\sigma_{N,\alpha}^{a}f(x)\longrightarrow f(x).
\end{equation*}
\end{lemma}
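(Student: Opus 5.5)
We prove Lemma~\ref{mainlemma} directly from the kernel representation \eqref{kernel-representation}, splitting the integral over $(0,\pi]$ into three ranges:
\begin{equation*}
I_1=\int_0^{a_N^{-1}},\qquad I_2=\int_{a_N^{-1}}^{\delta},\qquad I_3=\int_{\delta}^{\pi},
\end{equation*}
where $0<\delta\le\delta_0$ is a small parameter, fixed before letting $N\to\infty$. Let $\varepsilon>0$. Since $x$ is a Lebesgue point, $\Phi_x(h)=o(h)$, so we may choose $\delta$ with $\Phi_x(h)\le\varepsilon h$ for all $0<h\le\delta$. The goal is to show that $\limsup_N|\sigma^a_{N,\alpha}f(x)-f(x)|\le C\varepsilon$, with $C$ equal to the supremum in \eqref{l2} times an absolute constant.

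\emph{Range $I_1$.} Use the trivial bound $|\sin((a_k+1/2)t)|\le (a_k+1/2)t\le 2a_N t$, valid since $a_k\le a_N$. Together with \eqref{cesaro-identity} this gives $|H_{N,\alpha}(t)|\le 2a_N t\,A^{\alpha}_{N-1}$. Since $2\sin(t/2)\ge 2t/\pi$ on $(0,\pi]$, we get $|K_{N,\alpha}(t)|\le \pi a_N$ there. Hence
\begin{equation*}
|I_1|\le \pi a_N\,\Phi_x(a_N^{-1})\le \pi\varepsilon
\end{equation*}
as soon as $a_N^{-1}\le\delta$. This range does not use \eqref{l1} or \eqref{l2} at all.

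\emph{Range $I_2$.} This is the main step. Using \eqref{l1}, bound $|I_2|\le\int_{a_N^{-1}}^{\delta}|\varphi_x(t)|M_N(t)\,dt$. Integrate by parts with $d\Phi_x=|\varphi_x|\,dt$; this is legitimate because $M_N$ is locally absolutely continuous and $\Phi_x$ is absolutely continuous. We obtain
\begin{equation*}
\Phi_x(\delta)M_N(\delta)-\Phi_x(a_N^{-1})M_N(a_N^{-1})+\int_{a_N^{-1}}^{\delta}\Phi_x(t)\,(-M_N'(t))\,dt .
\end{equation*}
The middle term is nonpositive and can be dropped. Since $M_N$ is nonincreasing, $-M_N'\ge0$, so the last integral is at most $\varepsilon\int_{a_N^{-1}}^{\delta}t\,(-M_N'(t))\,dt$. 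A second integration by parts rewrites this as
\begin{equation*}
\varepsilon\Bigl(a_N^{-1}M_N(a_N^{-1})-\delta M_N(\delta)+\int_{a_N^{-1}}^{\delta}M_N\,dt\Bigr).
\end{equation*}
The boundary term $\Phi_x(\delta)M_N(\delta)\le\varepsilon\delta M_N(\delta)$ cancels against $-\varepsilon\delta M_N(\delta)$. What remains is $\varepsilon$ times the quantity in \eqref{l2}, which is uniformly bounded. The only care needed is that $\delta\le\delta_0$, so that the integrals of $M_N$ up to $\delta$ are dominated by those up to $\delta_0$, using $M_N\ge0$.

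\emph{Range $I_3$.} On $[\delta,\pi]$ we have $1/(2\sin(t/2))\le \pi/(2\delta)$, bounded. Hence
\begin{equation*}
I_3=\frac{1}{A^{\alpha}_{N-1}}\sum_{k=1}^N A^{\alpha-1}_{N-k}\int_\delta^\pi g(t)\sin((a_k+1/2)t)\,dt,\qquad g=\frac{\varphi_x}{2\sin(t/2)}\in L^1[\delta,\pi].
\end{equation*}
By the Riemann--Lebesgue lemma the inner integrals $c_k$ tend to $0$ as $k\to\infty$, because $a_k\to\infty$. The weights $A^{\alpha-1}_{N-k}/A^{\alpha}_{N-1}$ are nonnegative and sum to $1$ by \eqref{cesaro-identity}. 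The mass they assign to any fixed finite set of indices $k\le K$ is $O(N^{\alpha-1}\cdot K/N^{\alpha})\to0$ by \eqref{cesaro-asymptotic} (for $\alpha\le1$, $A^{\alpha-1}_{N-k}\lesssim (N-K)^{\alpha-1}$ when $k\le K$). The means are therefore regular (Toeplitz), so $I_3\to0$.

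Combining the three ranges gives $\limsup_N|\sigma^a_{N,\alpha}f(x)-f(x)|\le C\varepsilon$, and since $\varepsilon>0$ was arbitrary, $\sigma^a_{N,\alpha}f(x)\to f(x)$. The main obstacle is the integration-by-parts bookkeeping in $I_2$. One must check that the boundary terms at $t=a_N^{-1}$ and at $t=\delta$ are controlled exactly by the two pieces of \eqref{l2}, and that this holds with $\delta$ fixed independently of $N$.
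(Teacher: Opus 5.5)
Your proposal is correct and follows the paper's proof step by step: the same three-range split, the trivial $O(a_N)$ kernel bound on $(0,a_N^{-1}]$, integration by parts against $\Phi_x$ on $[a_N^{-1},\delta]$, and Riemann--Lebesgue plus Toeplitz regularity on $[\delta,\pi]$. The differences are cosmetic: you write out the second integration by parts that the paper leaves implicit in its final equality, and you bound the kernel near zero via $|\sin u|\le|u|$ and $\sin(t/2)\ge t/\pi$ rather than the Dirichlet-kernel bound $a_k+1/2$.
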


\begin{proof}
Fix $f\in L^{1}(\T)$ and a Lebesgue point $x$ of $f$. Given
$\varepsilon>0$, choose $\delta\in(0,\delta_0]$ such that
\begin{equation}
\Phi_x(h)\leq\varepsilon h,
\qquad 0<h\leq\delta.
\label{phi-small}
\end{equation}
For all sufficiently large $N$, we have $a_N^{-1}<\delta$, and we split
\begin{align}
\int_0^\pi\varphi_x(t)K_{N,\alpha}(t)\,dt
={}&\int_0^{a_N^{-1}}\varphi_x(t)K_{N,\alpha}(t)\,dt
+\int_{a_N^{-1}}^\delta\varphi_x(t)K_{N,\alpha}(t)\,dt \notag\\
&+\int_\delta^\pi\varphi_x(t)K_{N,\alpha}(t)\,dt.
\label{1-3}
\end{align}

By \eqref{cesaro-identity}, the coefficients in the definition of
$K_{N,\alpha}$ are nonnegative and sum to one. Moreover,
\begin{equation*}
\left|\frac{\sin((a_k+1/2)t)}{2\sin(t/2)}\right|
\leq a_k+\frac12\leq a_N+\frac12, k\leq N.
\end{equation*}
Consequently, $|K_{N,\alpha}(t)|\leq Ca_N$ for all $t$, and therefore
\begin{equation}
\left|\int_0^{a_N^{-1}}\varphi_x(t)K_{N,\alpha}(t)\,dt\right|
\leq Ca_N\Phi_x(a_N^{-1})
\leq C\varepsilon.
\label{small-part}
\end{equation}

For the middle integral, \eqref{l1}, integration by parts, and
\eqref{phi-small} give
\begin{align}
\left|\int_{a_N^{-1}}^\delta
\varphi_x(t)K_{N,\alpha}(t)\,dt\right|
&\leq\int_{a_N^{-1}}^\delta M_N(t)\,d\Phi_x(t) \notag\\
&=M_N(\delta)\Phi_x(\delta)
-M_N(a_N^{-1})\Phi_x(a_N^{-1})
-\int_{a_N^{-1}}^\delta\Phi_x(t)M_N'(t)\,dt \notag\\
&\leq\varepsilon\delta M_N(\delta)
+\varepsilon\int_{a_N^{-1}}^\delta t\bigl(-M_N'(t)\bigr)\,dt \notag\\
&=\varepsilon\left(
a_N^{-1}M_N(a_N^{-1})
+\int_{a_N^{-1}}^\delta M_N(t)\,dt\right)
\leq C\varepsilon.
\label{middle-part}
\end{align}

It remains to treat the last integral. Write
\begin{equation}
\int_\delta^\pi\varphi_x(t)K_{N,\alpha}(t)\,dt
=\frac{1}{A_{N-1}^\alpha}
\sum_{k=1}^N A_{N-k}^{\alpha-1}c_k,
\label{tail-sum}
\end{equation}
where
\begin{equation*}
c_k:=\int_\delta^\pi
\varphi_x(t)\frac{\sin((a_k+1/2)t)}{2\sin(t/2)}\,dt.
\end{equation*}
Since $\varphi_x(t)/(2\sin(t/2))\in L^1([\delta,\pi])$, the
Riemann--Lebesgue lemma implies that $c_k\to 0$ as $k\to\infty$.
Furthermore, the numbers
\begin{equation*}
\lambda_{N,k}:=
\frac{A_{N-k}^{\alpha-1}}{A_{N-1}^\alpha},
\qquad 1\leq k\leq N,
\end{equation*}
are nonnegative and satisfy $\sum_{k=1}^N\lambda_{N,k}=1$. For every fixed
$k$, \eqref{cesaro-asymptotic} gives
\begin{equation*}
\lambda_{N,k}
=\frac{A_{N-k}^{\alpha-1}}{A_{N-1}^\alpha}
\asymp\frac{1}{N}\longrightarrow 0.
\end{equation*}
Hence the Toeplitz lemma yields
\begin{equation}
\int_\delta^\pi\varphi_x(t)K_{N,\alpha}(t)\,dt\longrightarrow 0.
\label{tail-part}
\end{equation}
Combining \eqref{1-3}, \eqref{small-part}, \eqref{middle-part}, and
\eqref{tail-part}, and then letting $\varepsilon\downarrow 0$, completes the
proof.
\end{proof}

\begin{proof}[Proof of Theorem~\ref{main}]
Let $r$ be the degree of $P$, and write $c_r\neq 0$ for its leading
coefficient. Choose
\begin{equation}
0<\theta<\min\{\eta_0(r),\alpha/r\},
\label{theta-choice}
\end{equation}
where $\eta_0(r)$ is given by Lemma~\ref{Weyl}. We first establish the
weighted exponential-sum estimate
\begin{equation}
\left|\frac{1}{A_{N-1}^\alpha}
\sum_{k=1}^N A_{N-k}^{\alpha-1}e^{iP(k)t}\right|
\leq C\left(t^\theta+N^{-\theta}
+N^{-r\theta}t^{-\theta}\right)
\label{cesest}
\end{equation}
for all $N\geq 1$ and $0<t\leq 1$.

Indeed, changing variables $j=N-k$ gives
\begin{equation*}
\sum_{k=1}^N A_{N-k}^{\alpha-1}e^{iP(k)t}
=\sum_{j=0}^{N-1}A_j^{\alpha-1}e^{iP(N-j)t}.
\end{equation*}
For $0\leq M\leq N-1$, Lemma~\ref{Weyl}, applied to the polynomial
$u\mapsto P(N-u)t$, yields the following estimate. Here $L=M+1$, and the
leading coefficient is $(-1)^r c_r t$.
\begin{equation}
\left|\sum_{j=0}^{M}e^{iP(N-j)t}\right|
\leq C\left((M+1)t^\theta+(M+1)^{1-\theta}
+t^{-\theta}(M+1)^{1-r\theta}\right).
\label{partial-weyl}
\end{equation}
Abel summation gives
\begin{align*}
\sum_{j=0}^{N-1}A_j^{\alpha-1}e^{iP(N-j)t}
={}&\sum_{M=0}^{N-2}
\bigl(A_M^{\alpha-1}-A_{M+1}^{\alpha-1}\bigr)
\sum_{j=0}^{M}e^{iP(N-j)t}\\
&+A_{N-1}^{\alpha-1}
\sum_{j=0}^{N-1}e^{iP(N-j)t}.
\end{align*}
For $0<\alpha<1$,
\begin{equation*}
A_M^{\alpha-1}-A_{M+1}^{\alpha-1}
=\frac{1-\alpha}{M+1}A_M^{\alpha-1}
\leq C(M+1)^{\alpha-2},
\end{equation*}
while for $\alpha=1$ the difference is zero. Consequently, if
$\beta\in\{1,1-\theta,1-r\theta\}$, then \eqref{theta-choice} implies
$\alpha+\beta>1$, and
\begin{equation}
\sum_{M=0}^{N-2}
\bigl(A_M^{\alpha-1}-A_{M+1}^{\alpha-1}\bigr)(M+1)^\beta
+A_{N-1}^{\alpha-1}N^\beta
\leq C N^{\alpha+\beta-1}.
\label{weighted-power-sum}
\end{equation}
Indeed,
\begin{equation*}
\sum_{M=0}^{N-2}(M+1)^{\alpha+\beta-2}
\leq C N^{\alpha+\beta-1}
\end{equation*}
because $\alpha+\beta>1$, while
$A_{N-1}^{\alpha-1}N^\beta\leq C N^{\alpha+\beta-1}$.
Using \eqref{partial-weyl} and \eqref{weighted-power-sum}, we obtain
\begin{equation*}
\left|\sum_{j=0}^{N-1}A_j^{\alpha-1}e^{iP(N-j)t}\right|
\leq C\left(N^\alpha t^\theta+N^{\alpha-\theta}
+t^{-\theta}N^{\alpha-r\theta}\right).
\end{equation*}
Division by $A_{N-1}^\alpha\asymp N^\alpha$ proves \eqref{cesest}.

Now choose $K_0$ such that $a_k=P(k)$ for all $k\geq K_0$. The finitely
many initial discrepancies contribute
\begin{align*}
&\left|\frac{1}{A_{N-1}^\alpha}
\sum_{k=1}^{K_0-1}A_{N-k}^{\alpha-1}
\bigl(e^{ia_kt}-e^{iP(k)t}\bigr)\right|\\
&\hspace{4cm}\leq \frac{C}{N}\leq C N^{-\theta}.
\end{align*}
After increasing the constant, the estimate also covers the finitely many
remaining values of $N$. 

Since
\begin{equation*}
H_{N,\alpha}(t)
=\operatorname{Im}\left(
e^{it/2}\sum_{k=1}^N A_{N-k}^{\alpha-1}e^{ia_kt}
\right)
\end{equation*}
and $\sin(t/2)\geq c_0t$ for $0<t\leq 1$, we obtain
\begin{equation}
|K_{N,\alpha}(t)|
\leq C\left(t^{\theta-1}+N^{-\theta}t^{-1}
+N^{-r\theta}t^{-\theta-1}\right),
\qquad 0<t\leq 1.
\label{kernel-bound}
\end{equation}

Set $\delta_0:=1$ and
\begin{equation}
M_N(t):=C\left(t^{\theta-1}+N^{-\theta}t^{-1}
+N^{-r\theta}t^{-\theta-1}\right),
\qquad 0<t\leq 1.
\label{majorant}
\end{equation}
The function $M_N$ is nonnegative, nonincreasing, and locally absolutely
continuous. Since $a_N=P(N)$ for all sufficiently large $N$ and $P$ has
degree $r$ with positive leading coefficient,
\begin{equation*}
a_N\asymp N^r.
\end{equation*}
Thus
\begin{align*}
a_N^{-1}M_N(a_N^{-1})
&\leq C\left(a_N^{-\theta}+N^{-\theta}
+N^{-r\theta}a_N^\theta\right)
\leq C,
\end{align*}
and
\begin{align*}
\int_{a_N^{-1}}^1M_N(t)\,dt
&\leq C\left(
\int_{a_N^{-1}}^1t^{\theta-1}\,dt
+N^{-\theta}\int_{a_N^{-1}}^1\frac{dt}{t}
+N^{-r\theta}\int_{a_N^{-1}}^1t^{-\theta-1}\,dt
\right)\\
&\leq C\left(1+N^{-\theta}\log a_N
+N^{-r\theta}a_N^\theta\right)
\leq C.
\end{align*}
After increasing the constant once more to cover finitely many initial
indices, $M_N$ satisfies \eqref{l1} and \eqref{l2} for every $N$.
Lemma~\ref{mainlemma} therefore completes the proof.
\end{proof}

\noindent\textbf{Conflicts of interest.} The author declares that there are
no conflicts of interest.

\medskip \noindent\textbf{Data availability.} Not applicable.

\medskip \noindent\textbf{Funding.} This research received no external funding.

\end{document}